\newtheorem{theorem}{Theorem}
\newtheorem{remark}[theorem]{Remark}
\newtheorem{proposition}[theorem]{Proposition}
\newtheorem{lemma}[theorem]{Lemma}
\title{The Geometry of Axisymmetric Ideal Fluid Flows with Swirl}
\author{Pearce Washabaugh and Stephen C. Preston}
\newcommand{\tab}{\hspace*{2em}}
\DeclareMathOperator{\curl}{curl}
\newcommand{\Diffmu}{\mathcal{D}_{\mu}}
\newcommand{\DiffmuE}{\mathcal{D}_{\mu,E}}
\newcommand{\DiffmuF}{\mathcal{D}_{\mu,F}}
\newcommand{\id}{\text{id}}
\newcommand{\Reeb}{E}
\newcommand{\llangle}{\langle\!\langle}
\newcommand{\rrangle}{\rangle\!\rangle}
\date{8/27/2014}
\begin{document}

\maketitle

\begin{abstract}

 The sectional curvature of the volume preserving diffeomorphism group of a Riemannian manifold $M$ can give information about the stability of  inviscid, incompressible fluid flows on $M$. We demonstrate that the submanifold of the volumorphism group of the solid flat torus generated by axisymmetric fluid flows with swirl, denoted by $\DiffmuE(M)$, has positive sectional curvature in every section containing the field $X = u(r)\partial_\theta$ iff $\partial_r(ru^2)>0$.
This is in sharp contrast to the situation on $\Diffmu(M)$, where only Killing fields $X$ have nonnegative sectional curvature in all sections containing it. We also show that this criterion guarantees the existence of conjugate points on $\DiffmuE(M)$ along the geodesic defined by $X$.

\end{abstract}

\section*{Introduction}
\tab Let $(M,g)$ be a Riemannian manifold of dimension at least two with Riemannian volume form $\mu$. The configuration space for inviscid, incompressible fluid flows on $M$ is the collection of volume-preserving diffeomorphisms (volumorphisms) of $M$, denoted by $\mathcal{D}_\mu(M)$. Arnold \cite{Arnold66} demonstrated in 1966 that $\mathcal{D}_\mu(M)$ can be thought of as an infinite dimensional Riemannian manifold. He also showed that flows obeying the Euler equations for inviscid, incompressible fluid flow can be realized as geodesics on $\mathcal{D}_\mu(M)$. This was proved rigorously in the context of Sobolev manifolds by Ebin and Marsden \cite{Ebin70}. Using this framework, questions of fluid mechanics can be re-phrased in terms of the Riemannian geometry of $\Diffmu(M)$. A good account of this is given in \cite{Arnold98} or more recently in \cite{KLMP2}. Of particular interest is the sectional curvature of $\Diffmu(M)$. As in finite dimensional geometry, given two geodesics with varying initial velocities in a region of strictly positive (resp. negative) sectional curvature, the two geodesics will converge (resp. diverge) via the Rauch Comparison theorem. In terms of fluid mechanics, this corresponds to the Lagrangian stability (resp. instability) of the associated fluid flows.\\
\tab Arnold showed that the sectional curvature $K(X,Y)$ of the plane in $T_{\id}\Diffmu(M)$ spanned by $X$ and $Y$  is often negative but occasionally positive. Rouchon~\cite{Rouchon91} sharpened this to show that if $M\subset \mathbb{R}^3$, then $K(X,Y)\ge 0$ for every $Y\in T_{\id}\Diffmu(M)$ if and only if $X$ is a Killing field (i.e., one for which the flow generates a family of isometries). This result was generalized by Misio{\l}ek \cite{Misiolek92} and the second author \cite{Preston02} for any manifold with $\dim{M}\ge 2$. This gives the impression that, in general, $D_\mu(M)$ will mostly be negatively curved. The question of when one can expect a divergence free vector field to give nonpositive sectional curvature remains open. However, the second author \cite{Preston05} provided criteria for divergence free vector fields of the form $X = u(r)\partial_\theta$ on the area-preserving diffeomorphism groups of a rotationally-symmetric surface for which the sectional curvature $K(X,Y)$ is nonpositive for all $Y$.\\
\tab Our goal in this paper is to extend the curvature computation to $\DiffmuE(M)$, the group of volumorphisms commuting with the flow of a Killing field $\Reeb$. In particular, we consider the solid flat torus, $M= D^2\times S^1$, where $D^2$ is the unit disk in $\mathbb{R}^2$ and $S^1$ is the unit circle, as a subset of $\mathbb{R}^3$ with the planes $z=0$ and $z=2\pi$ identified, where $E = \partial_\theta$ is the field corresponding to rotation in the disc. One may imagine fluid flows on this manifold as axisymmetric ideal flows with swirl on the solid infinite cylinder which are $2\pi$-periodic in the $z$-direction. We consider steady fluid velocity fields of the form $X = u(r)\partial_\theta$. The submanifold  $\DiffmuE(M)$ is a totally geodesic submanifold of $\Diffmu(M)$, corresponding to the fact that an ideal fluid which is initially independent of $\theta$ will always remain so. Hence we compute sectional curvatures $K(X,Y)$ where $Y\in T_{\id}\DiffmuE(M)$ is divergence-free \emph{and} axisymmetric, i.e., $[\Reeb,Y]=0$.

\tab In \cite{Preston05} we effectively showed that when $X$ was considered as an element of $\DiffmuF(M)$ where $F = \frac{\partial}{\partial z}$ (corresponding to considering $X$ as a two-dimensional flow rather than a three-dimensional flow), the sectional curvature satisfied $K(X,Y)\le 0$ for every $Y\in T_{\id}\DiffmuF(M)$ regardless of $u(r)$. By contrast we show here that if $u$ satisfies the condition
\begin{equation}\label{positivecurvature}
\frac{d}{dr}\big( ru(r)^2\big) >  0,
\end{equation}
then $K(X,Y)> 0$ for every $Y\in T_{\id}\DiffmuE(M)$. We will also show that $\frac{d}{dr}\big( ru(r)^2\big) \geq  0$ implies that $K(X,Y)\geq 0$. This does not contradict the result of Rouchon, since the proof of that result relies on being able to construct a divergence-free velocity field with small support which points in a given direction and is orthogonal to another direction, and there are not enough divergence-free vector fields in the axisymmetric case to accomplish this here.

\tab The fact that the curvature is strictly positive in every section containing $X$ makes it natural to ask whether there are conjugate points along every such corresponding geodesic. Unfortunately the Rauch comparison theorem cannot be used here, since $\inf_{Y\in T_{\id}\DiffmuE(M)} K(X,Y) = 0$ even if \eqref{positivecurvature} holds. Nonetheless we can show that as long as
\begin{equation}\label{conjugatepoints}
ru(r)u'(r) + 2u(r)^2 > 0,
\end{equation}
the geodesic formed by $X = u(r)\partial_{\theta}$ has infinitely many monoconjugate points. It is easy to see that condition \eqref{positivecurvature} implies \eqref{conjugatepoints}. We do this by solving the Jacobi equation explicitly. As in \cite{EMP}, where the case $u(r) = 1$ was considered, we can prove that these monoconjugate points have an epiconjugate point as a limit point, so that the differential of the exponential map is not Fredholm.

The second author gratefully acknowledges support from NSF grants DMS-1157293 and DMS-1105660.


\section*{The Formula for Curvature}

We first compute the curvature of $\DiffmuE(M)$ by expanding in a Fourier series in $z$. Notice first of all that any vector field $Y$ which
is tangent to $\DiffmuE(M)$ at the identity must be divergence-free and must commute with $E=\frac{\partial}{\partial \theta}$. Therefore we
can write in the form
\begin{equation}\label{Ytangent}
Y(r,z) = -\frac{g_z(r,z)}{r} \, \partial_r + \frac{g_r(r,z)}{r} \, \partial_z + f(r,z) \, \partial_{\theta},
\end{equation}
where $f(0,z)=g(0,z) = 0$ and $g(1,z)$ is constant in $z$ (in order to be well-defined on the axis of symmetry and to have
$Y$ tangent to the boundary $r=1$). We think of the term $-\frac{g_z}{r} \partial_r + \frac{g_r}{r} \partial_z$ as an analogue of the skew-gradient
in two dimensions. We may express $Y$ in a Fourier series in $z$ as
$ Y(r,z) = \sum_{n\in\mathbb{Z}} Y_n(r,z)$
where
\begin{equation}\label{Ytangentfourier}
Y_n(r,z) = e^{inz} \left[ -\frac{in}{r} g_n(r) \, \partial_r + \frac{g_n'(r)}{r} \, \partial_z + f_n(r) \, \partial_\theta\right].
\end{equation}

On any Riemannian manifold $(M,g)$ with volume form $\mu$, a formula for the curvature tensor on $\mathcal{D}_\mu(M)$ is given by

\begin{equation}\label{generalcurvature}
R(Y,X)X = P\left(\nabla_YP(\nabla_XX)-\nabla_XP(\nabla_YX)+\nabla_{[X,Y]}X\right),
\end{equation}

where $P(X)$ is the projection onto the divergence-free part of $X$. Concretely, $P(X)$ is obtained by solving the Neumann boundary value problem

$$\begin{cases}

   \Delta q = \mbox{div }X & \mbox{ in }M\\

   \left<\nabla q, \vec{n}\right> = \left<X,\vec{n}\right> & \mbox{ on }\partial M

  \end{cases}
$$

for $q$ and then setting $P(X) = X - \nabla q$. The non-normalized sectional curvature is then given by

\begin{equation}\label{nonnormalizeddef}
\overline{K}(X,Y) = \llangle R(Y,X)X,\overline{Y}\rrangle = \int_M \left<R(Y,X)X,\overline{Y}\right> \mu.
\end{equation}

See \cite{Misiolek92} for the derivation of the formula we use here. We first compute $R(Y_n,X)X$.


\begin{proposition}\label{firstcurvatureformula}
Let $M = D^2\times S^1$. Suppose that $X\in T_{\id}\DiffmuE(M)$ is defined by $X = u(r)\partial_\theta$,
and let $Y_n$ be of the form \eqref{Ytangentfourier}. Then the curvature tensor $R(Y_n,X)X$ is given by
\begin{equation}\label{curvaturecomponent}
R(Y_n,X)X = P\left( -inug_n(2u'+\frac{u}{r})e^{inz}\partial_r+\frac{(q_n'+rf_nu)ue^{inz}}{r}\partial_\theta  \right),
\end{equation}
where $q_n$ is the solution of the ODE
\begin{equation}\label{qnODE}
\begin{cases}
\frac{1}{r}\,\frac{d}{dr}\left( r\,\frac{dq_n}{dr}\right) - n^2 q_n(r)  = -\frac{1}{r} \, \frac{d}{dr} \big( r^2 f_n(r) u(r)\big) \text{ for $0<r<1$} \\
q_n'(1) =  -f_n(1)u(1) \\
|q_n(0)|<\infty
\end{cases}
\end{equation}
\end{proposition}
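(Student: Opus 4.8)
The plan is to compute $R(Y_n,X)X$ directly from the curvature formula \eqref{generalcurvature} by carefully evaluating each of the three covariant-derivative terms in cylindrical coordinates. The essential preliminary is to record the Christoffel symbols (or equivalently the covariant derivatives of the coordinate fields) for the flat metric on $D^2\times S^1$ written in the coordinates $(r,\theta,z)$, where $g = dr^2 + r^2\,d\theta^2 + dz^2$. The only nonzero relations are the familiar polar ones, $\nabla_{\partial_r}\partial_\theta = \nabla_{\partial_\theta}\partial_r = \tfrac1r\partial_\theta$ and $\nabla_{\partial_\theta}\partial_\theta = -r\partial_r$, with every covariant derivative involving $\partial_z$ vanishing. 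I would assemble these into formulas for $\nabla_X X$, $\nabla_{Y_n} X$, $\nabla_X Y_n$, and $[X,Y_n]$, where $X = u(r)\partial_\theta$ and $Y_n$ is given by \eqref{Ytangentfourier}.

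First I would compute $\nabla_X X = u(r)^2\,\nabla_{\partial_\theta}\partial_\theta = -r u(r)^2\,\partial_r$, which is a gradient (indeed $-ru^2 = -\nabla(\int_0^r su(s)^2\,ds)$ up to constants), so $P(\nabla_X X)=0$ and the first term in \eqref{generalcurvature} drops out entirely; this is the payoff of $X$ being a steady state. Next I would compute $\nabla_{Y_n} X$ using the expression \eqref{Ytangentfourier}: the $\partial_r$-component of $Y_n$ acting through $u'(r)\partial_\theta$ plus the curvature term $\tfrac1r\partial_\theta$ contributions produce a field whose $\partial_\theta$ and $\partial_r$ parts I would collect, then apply $\nabla_X = u\,\nabla_{\partial_\theta}$ to it to obtain $\nabla_X P(\nabla_{Y_n}X)$. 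The commutator $[X,Y_n]$ I would compute from the bracket of the explicit vector fields, using $[\partial_\theta, \cdot]=0$ on all the $r$- and $z$-dependent coefficients (since $X$ is axisymmetric), so that $[X,Y_n]$ reduces to the transport of $Y_n$'s coefficients by $u\partial_\theta$ together with the $-inu g_n$ terms; then $\nabla_{[X,Y_n]}X$ follows by the same $\nabla_\bullet X$ computation. Collecting the $\partial_r$ and $\partial_\theta$ components and simplifying should yield exactly the unprojected field inside $P(\cdots)$ in \eqref{curvaturecomponent}, namely $-inug_n(2u'+\tfrac{u}{r})e^{inz}\partial_r + \tfrac{(q_n'+rf_nu)u e^{inz}}{r}\partial_\theta$.

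The role of $q_n$ is the subtle point: the term $\nabla_X P(\nabla_{Y_n}X)$ requires first applying the projection $P$ to $\nabla_{Y_n}X$, and it is precisely this intermediate projection that introduces the potential $q_n$. I would set up the Neumann problem defining $P(\nabla_{Y_n}X)$, writing $\nabla_{Y_n}X = $ (divergence-free part) $+ \nabla(\text{something})$, and show that the scalar potential solving $\Delta q_n = \operatorname{div}(\nabla_{Y_n}X)$ with the matching Neumann condition $\langle \nabla q_n,\vec n\rangle = \langle \nabla_{Y_n}X,\vec n\rangle$ at $r=1$ is exactly the $n$-th Fourier mode problem \eqref{qnODE}. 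Here the Laplacian in the $e^{inz}$ sector becomes the Bessel-type operator $\tfrac1r\tfrac{d}{dr}(r\tfrac{d}{dr}) - n^2$, the right-hand side $-\tfrac1r\tfrac{d}{dr}(r^2 f_n u)$ comes from computing $\operatorname{div}$ of the $\partial_\theta$-contribution of $\nabla_{Y_n}X$, and the boundary term $q_n'(1) = -f_n(1)u(1)$ and the regularity condition $|q_n(0)|<\infty$ record the Neumann condition and well-definedness on the axis.

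The main obstacle I anticipate is the bookkeeping in the projection step rather than any conceptual difficulty: I must track which pieces of $\nabla_{Y_n}X$ are already divergence-free and tangent to the boundary (and so survive $P$) versus which are gradients absorbed into $q_n$, and ensure that after applying the outer $\nabla_X$ the gradient $\nabla q_n$ reappears as the $\tfrac{q_n'}{r}$ factor in the $\partial_\theta$-component. A secondary check is confirming that the final application of $P$ in \eqref{curvaturecomponent} is consistent — that the displayed field is the correct argument of the outer projection — and that the axis regularity and boundary conditions are compatible with the constraints $f_n(0)=g_n(0)=0$ imposed on $Y_n$. Once the coordinate covariant derivatives and the two-level projection are organized carefully, the identification with \eqref{curvaturecomponent} and \eqref{qnODE} should follow by direct simplification.
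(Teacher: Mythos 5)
Your proposal is correct and follows essentially the same route as the paper's proof: term-by-term evaluation of \eqref{generalcurvature} using the cylindrical covariant derivatives, the observation that $\nabla_XX=-ru^2\partial_r$ is a gradient killed by $P$, the Neumann problem for $q_n$ in the $e^{inz}$ sector giving \eqref{qnODE}, and then assembling $\nabla_XP(\nabla_{Y_n}X)$ and $\nabla_{[X,Y_n]}X$. The only detail the paper handles that you do not flag explicitly is the separate (trivial) case $n=0$, where $\nabla_{Y_0}X$ is itself a gradient and $P(\nabla_{Y_0}X)=0$.
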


\begin{proof}
We compute using formula \eqref{generalcurvature}. First note that $\nabla_X X  = -ru^2\partial_r$, which is the gradient of a function.
Thus $P(\nabla_X X ) = 0$. Next for $n=0$ note that
%
%
%
%
%
$$Y_0 = \tfrac{1}{r} \, g_0'(r) \partial_z + f_0(r)\partial_\theta$$
and
$\nabla_{Y_0}X = -rf_0(r)u(r)\partial_r$.

This is also the gradient of a function, and thus

$$P(\nabla_{Y_0}X) = 0.$$

Now for $n\neq 0$,

$$\nabla_{Y_n}X = -rf_nu e^{inz}\partial_r - \frac{in}{r}g_n(u'+\frac{u}{r})e^{inz}\partial_\theta.$$

The solution $q_n(r)e^{inz}$ of
\begin{equation}\label{qnPDE}
\begin{cases}
\Delta (q_n(r) e^{inz}) = \mbox{div}(\nabla_{Y_n}X) &\mbox{in }M, \\
\left.\left<\nabla (q_ne^{inz}),\vec{n}\right> \right|_{\partial M}= \left.\left<\nabla_{Y_n}X,\vec{n}\right>\right|_{\partial M}& \mbox{on }\partial M,
\end{cases}
\end{equation}
clearly must satisfy \eqref{qnODE}. With this solution in hand we will get

$$\nabla_X(P(\nabla_{Y_n}X)) = inug_n\left(u'+\frac{u}{r}\right)e^{inz} \partial_r- \frac{(q_n'+rf_nu)ue^{inz}}{r}\,\partial_\theta. $$

We also easily compute

$$\nabla_{[X,Y]}X = -in g_n(r)u(r)u'(r)e^{inz} \, \partial_r.$$

So, $R$ will be given by \eqref{curvaturecomponent}.
\end{proof}

To get a more explicit and useful formula for curvature, we proceed to solve the ODE \eqref{qnODE}.
%
%
%
%
%
%
%
\begin{lemma}\label{qnODEsoln}
If $q_n$ satisfies \eqref{qnODE}, then the solution is given by
\begin{equation}\label{qnformula}
q_n(r) = -\zeta_n(r) H_n(r) + \xi_n(r) J_n(r),
\end{equation}
where
\begin{equation}
H_n(r) = \int_0^r s^2 f_n(s) u(s) \xi_n'(s) \, ds \qquad\text{and}\qquad
J_n(s) = -\int_r^1 s^2 f_n(s) u(s) \zeta_n'(s) \, ds,\label{HnJndef}
\end{equation}
and
$$ \xi_n(r) = I_0(nr) \qquad \text{and}\qquad \zeta_n(r) = \tfrac{K_1(n)}{I_1(n)} I_0(nr) + K_0(nr),$$
with $I_0$ and $K_0$ denoting the modified Bessel functions of the first and second kinds.
\end{lemma}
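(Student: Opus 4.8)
The plan is to solve the linear second-order ODE in \eqref{qnODE} by variation of parameters, which requires first identifying a basis of solutions to the associated homogeneous equation. The homogeneous equation is $\frac{1}{r}(r q')' - n^2 q = 0$, i.e. $r^2 q'' + r q' - n^2 r^2 q = 0$, which after the substitution $x = nr$ becomes the modified Bessel equation of order zero, $x^2 q_{xx} + x q_x - x^2 q = 0$. Its two independent solutions are $I_0(nr)$ and $K_0(nr)$. First I would record this, thereby justifying the choice $\xi_n(r) = I_0(nr)$ as the solution regular at $r=0$ (since $K_0$ blows up logarithmically there), and I would take $\zeta_n(r)$ to be the particular linear combination of $I_0(nr)$ and $K_0(nr)$ engineered to satisfy the Neumann condition at $r=1$. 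Concretely, using $I_0' = I_1$ and $K_0' = -K_1$, one checks that $\zeta_n(r) = \frac{K_1(n)}{I_1(n)} I_0(nr) + K_0(nr)$ satisfies $\zeta_n'(1) = n\big(\frac{K_1(n)}{I_1(n)} I_1(n) - K_1(n)\big) = 0$, so that the term built from $\zeta_n$ will not interfere with the boundary condition $q_n'(1) = -f_n(1)u(1)$.

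Next I would assemble the general solution by variation of parameters. Writing the right-hand side of \eqref{qnODE} in Sturm--Liouville form with weight $r$, the equation is $(r q_n')' - n^2 r q_n = -\frac{d}{dr}(r^2 f_n u)$. Variation of parameters against the basis $\{\xi_n, \zeta_n\}$ produces a solution of the form $q_n = -\zeta_n H_n + \xi_n J_n$ with $H_n, J_n$ given by Wronskian integrals of the forcing against $\xi_n'$ and $\zeta_n'$ respectively. The claimed formulas \eqref{HnJndef} have integrands $s^2 f_n(s) u(s) \xi_n'(s)$ and $s^2 f_n(s) u(s) \zeta_n'(s)$; these arise after integrating the forcing term $-\frac{d}{ds}(s^2 f_n u)$ against $\zeta_n$ (resp. $\xi_n$) by parts, which converts $\frac{d}{ds}(s^2 f_n u)$ into $s^2 f_n u$ paired with the derivative $\zeta_n'$ (resp. $\xi_n'$), up to the constant Wronskian factor $W(\xi_n,\zeta_n) = \xi_n \zeta_n' - \xi_n' \zeta_n$, which for these Bessel combinations equals $-1/r$ and exactly cancels the weight. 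The limits of integration, $\int_0^r$ for $H_n$ and $\int_r^1$ for $J_n$, are chosen so that the regularity condition at $r=0$ and the Neumann condition at $r=1$ are each automatically enforced by the vanishing of the respective integral at that endpoint.

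Finally I would verify that \eqref{qnformula} does solve \eqref{qnODE}, which is the cleanest way to present the argument: differentiate $q_n = -\zeta_n H_n + \xi_n J_n$ twice, using $H_n' = r^2 f_n u \, \xi_n'$ and $J_n' = r^2 f_n u \, \zeta_n'$ (note the sign from differentiating the lower limit in $J_n$), so that the $H_n', J_n'$ terms combine through the Wronskian to reproduce exactly the forcing term, while the $H_n, J_n$ terms cancel because $\xi_n$ and $\zeta_n$ individually solve the homogeneous equation. Checking the boundary data then amounts to observing that $H_n(0)=0$ gives finiteness at the origin through the $\xi_n J_n$ term (and the $\zeta_n H_n$ term vanishes), while $J_n(1)=0$ together with $\zeta_n'(1)=0$ leaves $q_n'(1) = \xi_n(1) J_n'(1)/\xi_n(1)$ reducing to $-f_n(1)u(1)$ after accounting for the Wronskian normalization. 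The main obstacle I anticipate is bookkeeping: getting the signs and the Wronskian normalization exactly right so that the prefactors $s^2$ and the cancellation of the weight $r$ come out as stated, and confirming that the particular combination defining $\zeta_n$ is precisely the one that kills the boundary flux at $r=1$ while the integral limits handle the inhomogeneous part.
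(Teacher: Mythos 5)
Your proposal is correct and follows essentially the same route as the paper: the paper likewise notes that $I_0$ and $K_0$ solve the homogeneous equation, invokes variation of parameters plus an integration by parts to motivate \eqref{qnformula}, and then verifies the formula directly by differentiating, using $H_n' = r^2 f_n u\,\xi_n'$, $J_n' = r^2 f_n u\,\zeta_n'$, the Wronskian value $\xi_n\zeta_n'-\zeta_n\xi_n' = -1/r$, and $\zeta_n'(1)=J_n(1)=0$ for the boundary condition. Your additional remarks on why $\xi_n$ and $\zeta_n$ are the right homogeneous solutions for the two endpoints only make the argument more explicit than the paper's.
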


\begin{proof}
Since $I_0$ and $K_0$ solve the homogeneous version of \eqref{qnODE}, this is essentially just the
variation of parameters formula together with an integration by parts. 
We simply verify the solution: taking the derivative of $q_n(r)$, we obtain
\begin{equation}\label{qnprime}
\begin{split}
q_n'(r) &= -\zeta_n'(r) H_n(r) + \xi_n'(r) J_n(r) + r^2 f_n(r)u(r) \big( \xi_n(r)\zeta_n'(r) - \zeta_n(r) \xi_n'(r)\big) \\
&= -\zeta_n'(r) H_n(r) + \xi_n'(r) J_n(r) - r f_n(r) u(r),
\end{split}
\end{equation}
and since $\zeta_n'(1)=J_n(1)=0$ we get the correct boundary condition. Furthermore we get
$$ q_n''(r) = -\zeta_n''(r) H_n(r) + \xi_n''(r) J_n(r) - \frac{d}{dr} \big( r f_n(r) u(r)\big),$$
and with these formulas we easily check that $q_n$ satisfies \eqref{qnODE}.
\end{proof}


Plugging in the formula for $q_n$ from Lemma \ref{qnODEsoln} to the formula from Proposition \ref{firstcurvatureformula},
we obtain a very simple result.

\begin{theorem}\label{maintheorem1}
On $M=D^2\times S^1$ with $X = u(r) \, \partial_{\theta}$ and $Y$ expressed as in \eqref{Ytangent}, the non-normalized
sectional curvature is given by
$ \overline{K}(X,Y) = \sum_{n\in \mathbb{Z}} \overline{K}(X,Y_n)$, where
$Y_n$ is expressed as in \eqref{Ytangentfourier} and
\begin{equation}\label{finalcurvatureformula}
\overline{K}(X,Y_n) = 4\pi^2 \int_0^1 \frac{1}{r} \left( n^2 \, \lvert g_n(r)\rvert^2 \, \frac{d}{dr} \big( ru(r)^2\big) + \frac{\lvert H_n(r)\rvert^2}{I_1(nr)^2}\right) \, dr.
\end{equation}
Hence the curvature is positive for all $Y$ if and only if $\frac{d}{dr} \big(r u(r)^2\big) > 0$.
\end{theorem}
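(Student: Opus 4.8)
The plan is to pair the field $R(Y_n,X)X$ from Proposition~\ref{firstcurvatureformula} against $\overline{Y_n}$ in $L^2$ and exploit that $P$ is the $L^2$-orthogonal (hence self-adjoint) projection onto divergence-free fields. Since $\overline{Y_n}$ is itself divergence-free, $P\overline{Y_n}=\overline{Y_n}$, so the projection in \eqref{curvaturecomponent} may be discarded and $\overline{K}(X,Y_n)=\int_M\langle Z_n,\overline{Y_n}\rangle\,\mu$, where $Z_n$ is the vector field in parentheses in \eqref{curvaturecomponent}. Using the cylindrical metric $dr^2+r^2\,d\theta^2+dz^2$ with $\mu=r\,dr\,d\theta\,dz$, the $\partial_z$-component of $Z_n$ vanishes and the $e^{inz}$ factors cancel against those of $\overline{Y_n}$, so integrating over $\theta$ and $z$ contributes a factor $4\pi^2$ and leaves two radial integrals. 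The $\partial_r$-pairing produces $u\,(2u'+\tfrac ur)=\tfrac1r\tfrac{d}{dr}(ru^2)$ and reproduces the first term of \eqref{finalcurvatureformula} immediately.

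The $\partial_\theta$-pairing is the crux. It equals $I_2=4\pi^2\int_0^1 r^2u\,\overline{f_n}\,(q_n'+rf_nu)\,dr$, into which I would substitute $q_n'+rf_nu=-\zeta_n'H_n+\xi_n'J_n$ from \eqref{qnprime}. Writing $\psi_n=r^2f_nu$, the definitions \eqref{HnJndef} give $H_n'=\psi_n\xi_n'$ and $J_n'=\psi_n\zeta_n'$, whence $\overline{\psi_n}\,\xi_n'=\overline{H_n}'$ and $\overline{\psi_n}\,\zeta_n'=\overline{J_n}'$, so that $I_2=4\pi^2\int_0^1(\overline{H_n}'J_n-\overline{J_n}'H_n)\,dr$. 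The goal is to eliminate $J_n$ altogether: integrating $\int\overline{H_n}'J_n$ by parts (the boundary terms vanish since $H_n(0)=J_n(1)=0$) turns $J_n'$ into $\psi_n\zeta_n'=H_n'\,\zeta_n'/\xi_n'$, and treating $\int\overline{J_n}'H_n$ the same way the two terms combine into $I_2=-4\pi^2\int_0^1(|H_n|^2)'\,\tfrac{\zeta_n'}{\xi_n'}\,dr$. A second integration by parts, whose boundary terms vanish \emph{precisely because} $\zeta_n'(1)=0$ and $H_n(0)=0$, gives $I_2=4\pi^2\int_0^1|H_n|^2\,\tfrac{d}{dr}\!\big(\tfrac{\zeta_n'}{\xi_n'}\big)\,dr$.

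To finish I would evaluate $\tfrac{d}{dr}(\zeta_n'/\xi_n')=(\zeta_n''\xi_n'-\zeta_n'\xi_n'')/(\xi_n')^2$. Since $\xi_n=I_0(nr)$ and $\zeta_n$ both solve $\phi''=n^2\phi-\phi'/r$, the numerator collapses to $n^2(\zeta_n\xi_n'-\zeta_n'\xi_n)$, and the Wronskian relation $\xi_n\zeta_n'-\zeta_n\xi_n'=-1/r$ (equivalently $I_0(nr)K_1(nr)+I_1(nr)K_0(nr)=1/(nr)$, which is also what fixes the normalization of $\zeta_n$) yields $\tfrac{d}{dr}(\zeta_n'/\xi_n')=n^2/\!\left(r(\xi_n')^2\right)$. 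As $\xi_n'=nI_1(nr)$, this is exactly $1/\!\left(rI_1(nr)^2\right)$, giving the second term of \eqref{finalcurvatureformula}. I expect this double integration by parts and Wronskian evaluation to be the main obstacle: it is the only place where the regularity of $\xi_n$ at $r=0$ and the condition $\zeta_n'(1)=0$ are used together, and the whole simplification hinges on both endpoint contributions cancelling.

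For the equivalence I would read signs off \eqref{finalcurvatureformula}. The swirl integrand $|H_n|^2/I_1(nr)^2$ is nonnegative for every admissible $Y$, so the sign of $\overline{K}(X,Y)=\sum_n\overline{K}(X,Y_n)$ is governed entirely by $\tfrac{d}{dr}(ru^2)$. If $\tfrac{d}{dr}(ru^2)>0$ then every summand is nonnegative, and a summand with $n\neq0$ vanishes only if $g_n\equiv0$ and $H_n\equiv0$, i.e. $Y_n\equiv0$ (the $n=0$ mode contributes nothing, consistent with $R(Y_0,X)X=0$); hence $\overline{K}(X,Y)>0$ for every $Y$ with a nontrivial non-axisymmetric part. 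Conversely, were $\tfrac{d}{dr}(ru^2)$ not positive on some subinterval, I would choose a test field with $f_n\equiv0$ (so $H_n\equiv0$) and $g_n$ supported there, giving $\overline{K}(X,Y_n)=4\pi^2\int_0^1\tfrac1r n^2|g_n|^2\tfrac{d}{dr}(ru^2)\,dr\le0$ and contradicting positivity. This yields the stated criterion.
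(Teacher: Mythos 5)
Your proposal is correct and follows essentially the same route as the paper: reduce to the diagonal Fourier modes, discard the projection by self-adjointness, rewrite the swirl term as $\int_0^1(\overline{H_n'}J_n-\overline{J_n'}H_n)\,dr$, and integrate by parts twice using $H_n(0)=J_n(1)=\zeta_n'(1)=0$. The only cosmetic difference is that you derive $\frac{d}{dr}\big(\zeta_n'/\xi_n'\big)=1/\big(rI_1(nr)^2\big)$ from the ODE together with the Wronskian normalization, whereas the paper quotes the equivalent Bessel-quotient identity for $K_1/I_1$ directly.
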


\begin{proof}
Using formula \eqref{qnprime} in \eqref{curvaturecomponent}, we obtain
$$R(Y_n,X)X= P\left[(-inug_n(2u'+\frac{u}{r})e^{inz})\partial_r+\frac{u(\xi_n'J_n-\zeta_n'H_n)e^{inz}}{r}\partial_\theta\right],$$
which can clearly be expressed as $e^{inz}$ times a function of $r$ only.
Orthogonality of the functions $e^{imz}$ and $e^{inz}$ over $S^1$ when $m\ne n$ implies that
$$ \overline{K}(X,Y) = \sum_{m,n\in\mathbb{Z}} \llangle Y_m, R(Y_n, X)X\rrangle = \sum_{n\in\mathbb{Z}} \llangle \overline{Y_n}, R(Y_n, X)X\rrangle = \sum_{n\in\mathbb{Z}} \overline{K}(X,Y_n).$$

The latter is now relatively easy to compute. We have
\begin{equation}\label{curvaturefourier}
\overline{K}(X,Y_n) = 4\pi^2 \int_0^1 \frac{n^2}{r^2} \lvert g_n(r)\rvert^2\, \eta(r) \, dr
+ 4\pi^2 \int_0^1 r^2\overline{f_n(r)} \Big( u(r)\big( \xi_n'(r)J_n(r) - \zeta'(r)H_n(r)\big)\Big) \,dr,
\end{equation}
where $\eta(r) = \frac{d}{dr} \big( ru(r)^2\big)$.
By the definitions \eqref{HnJndef} of $H_n$ and $J_n$, we see that the second term in
\eqref{curvaturefourier} is
$$ 4\pi^2 \int_0^1 \big(\overline{H_n'(r)} J_n(r) - \overline{J_n'(r)} H_n(r)\big)\,dr.$$

%

%
%
%
%
%
%
%
%
%
%
%
From here we adapt the corresponding computation in \cite{Preston06}.
%
Integrating by parts and using the fact that $J_n(r)\overline{H_n}(r)\to 0$ as $r\to 0$ or $r\to 1$, we get
%

%
%
%
%
%
$$
\int_0^1\overline{H_n'(r)}J_n(r)-\overline{J_n'(r)}H_n(r)\,dr  = -2\mbox{Re}\int_0^1\overline{J_n'(r)}H_n(r)\,dr = \int_0^1\frac{J_n'(r)}{H_n'(r)}\frac{d}{dr}\left(\left|H_n(r)\right|^2\right)\,dr,$$
and another integration by parts (where again the boundary terms vanish) gives
$$ \int_0^1\overline{H_n'(r)}J_n(r)-\overline{J_n'(r)}H_n(r)\,dr = -\int_0^1\frac{d}{dr}\left(\frac{K_1(nr)}{I_1(nr)}\right) \lvert H_n(r)\rvert^2 \, dr.
$$
Finally the Bessel function identity $\frac{d}{dr} \left( \frac{K_1(r)}{I_1(r)}\right) = \frac{1}{r I_1(r)^2}$ implies \eqref{finalcurvatureformula}.
\end{proof}
%
%
%
%
%
%


\begin{remark}\label{lowerbound}
The normalized sectional curvature is given by
$ K(X,Y) = \frac{\overline{K}}{\llangle X,X\rrangle \llangle Y,Y\rrangle - \llangle X,Y\rrangle^2}$. Suppose that $f=0$ and that only one $g_n$ is nonzero in
\eqref{Ytangentfourier}; then we have $\llangle X,Y\rrangle = 0$ and the sectional curvature takes the form
$$ K(X,Y) = \frac{n^2 \int_0^1 \frac{1}{r} \, \lvert g_n(r)\rvert^2 \, \frac{d}{dr} \big( ru(r)^2\big) \, dr}{\left(\int_0^1 r^3u(r)^2\,dr\right) \left( \int_0^1 \big(\frac{n^2}{r} \lvert g_n(r)\rvert^2 + \lvert g_n'(r)\rvert^2 \big) \, dr\right)}.$$
We can make this arbitrarily small by choosing a highly oscillatory $g_n$. Hence although the curvature is strictly positive if $\frac{d}{dr} \big(ru(r)^2\big) > 0$, it cannot be
bounded below by any positive constant.
\end{remark}

\section*{Solution of the Jacobi equation}

It is natural to ask whether the positive curvature guaranteed by the theorem above ensures the existence of conjugate points
along the corresponding geodesic. This is not automatic since although the sectional curvature is positive in all sections
containing the geodesic's tangent vector, it is not bounded below by any positive constant because of Remark \ref{lowerbound}; hence the Rauch comparison theorem
cannot be applied directly. In this section we answer this question affirmatively by solving the Jacobi equation more or
less explicitly along such a geodesic, and show that in fact conjugate points occur rather frequently.

\begin{theorem}\label{conjugatepointtheorem}
Let $\eta(t)$ be a geodesic on $\DiffmuE(D^2\times S^1)$
with initial condition $\eta(0)=\mbox{id}$ and $\dot{\eta}(0) = X = u(r)\partial_\theta$.
Let $\omega(r) = 2u(r) + ru'(r)$ denote the vorticity function of $X$, and assume that $u(r)\omega(r)>0$ for all $r\in [0,1]$.
Then $\eta(t)$ is a monoconjugate point to $\eta(0)$ for every time $t = 2 \pi \lambda/n$, where $n\in\mathbb{N}$ is arbitrary and $\lambda$ is any eigenvalue of
the Bessel-type Sturm-Liouville problem
$$ \frac{1}{r} \, \frac{d}{dr}\big( r\psi'(r)\big) - \Big( n^2 + \frac{1}{r^2}\Big) \psi(r) = -2 \lambda^2 u(r) \omega(r) \psi(r), \qquad \psi(1)=0, \quad\psi(0) \text{ finite}.$$
\end{theorem}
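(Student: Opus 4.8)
The plan is to reduce the Jacobi equation along $\eta$ to a linear evolution equation in the Lie algebra $T_{\id}\DiffmuE(M)$ and then to solve it by separation of variables. Because $X=u(r)\partial_\theta$ is a steady solution of Euler (we saw $\nabla_XX=-ru^2\partial_r$ is a gradient), the geodesic is exactly the flow $\phi_t\colon(r,\theta,z)\mapsto(r,\theta+tu(r),z)$ of $X$, a one-parameter subgroup. Writing a Jacobi field as $J(t)=dR_{\eta(t)}y(t)$ with $y(t)\in T_{\id}\DiffmuE(M)$, the equation $\frac{D^2J}{dt^2}+R(J,\dot\eta)\dot\eta=0$ becomes a second-order linear ODE in $t$ for $y$: its zeroth-order term is the curvature operator $R(\,\cdot\,,X)X$ of Proposition \ref{firstcurvatureformula}, and its first-order term is the advection/connection term coming from the differential rotation $d\phi_t(\partial_r)=\partial_r+tu'(r)\partial_\theta$. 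Since $X$, the metric, and $y$ are all independent of $\theta$ and the data are independent of $z$, this equation commutes with $z$-translations and hence preserves the Fourier decomposition in $z$, so it suffices to solve it on each mode $Y_n$ of the form \eqref{Ytangentfourier}, parametrized by its stream function $g_n(t,r)$ and swirl $f_n(t,r)$.

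First I would write out the mode-$n$ system for $(g_n,f_n)$. The decisive fluid-mechanical input is that the differential rotation couples the poloidal part (carried by $g_n$) to the swirl $f_n$: a radial displacement advects the base angular momentum $r^2u(r)$ and so generates a swirl perturbation, whose centrifugal imbalance in turn drives the poloidal motion. Carrying the projection $P$ through exactly as in Lemma \ref{qnODEsoln} --- inverting the Laplacian via the modified Bessel functions $I_1(nr),K_1(nr)$ --- removes the pressure and slaves $f_n$ to $g_n$, leaving a single scalar equation for $\psi:=g_n$ of the form $\partial_t^2(\mathcal L_n\psi)=c\,n^2\,u(r)\omega(r)\,\psi$, where $\mathcal L_n\psi=\frac1r\frac{d}{dr}(r\psi')-(n^2+\frac1{r^2})\psi$ is the order-one modified Bessel operator already present in \eqref{finalcurvatureformula}, $c>0$ is a constant, and $2u\omega=\frac1{r^3}\frac{d}{dr}\big((r^2u)^2\big)$ is exactly the Rayleigh discriminant of the base rotation. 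The boundary conditions $\psi(1)=0$ and $\psi(0)$ finite are inherited from \eqref{Ytangent}.

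Separating variables $\psi(t,r)=a(t)\phi(r)$, the spatial factor must solve the singular Sturm--Liouville problem in the statement with eigenvalue $\lambda$ and weight $2u\omega$; the reduction then fixes $c$ and forces the temporal factor to obey $\ddot a+\sigma^2a=0$ with $\sigma=n/\lambda$, the inertial (epicyclic) frequency. The key point is that, since the poloidal and swirl components differ in phase by a quarter period, the reconstructed Jacobi field $y(t)$ is a combination of a $\sin(\sigma t)$ term and a $\big(1-\cos(\sigma t)\big)$ term with linearly independent spatial profiles. Imposing $y(0)=0$ selects the Jacobi field emanating from $\eta(0)=\id$, and the full vector $y(t)$ returns to zero only when $\sin\sigma t=0$ and $1-\cos\sigma t=0$ simultaneously, i.e. at the full period $t=2\pi/\sigma=2\pi\lambda/n$ --- which is why the conjugate time is a full rather than a half period, exactly as in \cite{EMP}. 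Such a $y$ is a nonzero element of $\ker d\exp_{\id}$, so $\eta(2\pi\lambda/n)$ is monoconjugate to $\eta(0)$; and since the hypothesis $u\omega>0$ makes the Sturm--Liouville weight strictly positive, the standard theory of singular Sturm--Liouville problems yields, for each $n$, infinitely many eigenvalues $\lambda=\lambda_{n,k}\to\infty$, hence infinitely many conjugate points.

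The main obstacle is the exact reduction in the second step: separating the connection (shear) terms from the curvature term and pushing the projection $P$ through, so as to land on a genuinely self-adjoint operator whose weight is precisely the Rayleigh discriminant $2u\omega$ rather than the $\frac{d}{dr}(ru^2)$ that appears in the curvature alone. This is more delicate than in \cite{EMP}, where $u\equiv1$ makes $X$ a Killing field and $\phi_t$ a rigid, isometric rotation that can be conjugated away to give coefficients constant in $t$; here $u'(r)\neq0$ keeps the shear in the equation and is exactly what produces the $u\omega$ restoring term. A secondary point requiring care is the vanishing of the boundary terms together with the regularity of $\phi$ at the axis $r=0$, which is needed both for the Bessel inversion and for the self-adjointness underlying the eigenvalue count.
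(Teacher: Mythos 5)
Your proposal follows essentially the same route as the paper: Fourier decomposition in $z$, reduction to the Bessel-type Sturm--Liouville problem with weight $2u(r)\omega(r)$, separation of variables giving temporal frequency $n/\lambda$, and the observation that the poloidal ($\sin$) and swirl ($1-\cos$) components of the Jacobi field vanish simultaneously only at the full period $t=2\pi\lambda/n$. The only notable difference is technical: the paper works with the first-order system (linearized flow plus linearized Euler for the pair $(Y,Z)$) and eliminates the projection $P$ simply by applying the curl to the linearized Euler equation, rather than inverting the Laplacian via modified Bessel functions as you suggest; both lead to the same scalar equation.
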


\begin{proof}
Along a geodesic $\eta(t)$ with (steady) Eulerian velocity field $X$, the Jacobi equation for a Jacobi field $J(t) = Y(t)\circ\eta(t)$
may be written~\cite{Preston02} as the system
\begin{align}
\frac{\partial Y}{\partial t} + [X,Y(t)] &= Z(t) \label{linearizedflow}\\
\frac{\partial Z}{\partial t} + P(\nabla_XZ(t) + \nabla_{Z(t)}X) &= 0, \label{linearizedeuler}
\end{align}
where $P$ is the orthogonal projection onto divergence-free vector fields.
The first equation is the linearized flow equation, while the second is the linearized Euler equation used in stability analysis.

Write
$$ Z(t,r,z) = -\frac{1}{r} \, \frac{\partial h}{\partial z}(t,r,z) \, \partial_r + \frac{1}{r} \, \frac{\partial h}{\partial r}(t,r,z)\,\partial_z + j(t,r,z) \, \partial_\theta,$$
where $h=0$ on the axis $r=0$ and $h$ is constant on the boundary $r=1$.
Then it is easy to compute that \eqref{linearizedeuler} becomes the system
\begin{align}
\frac{\partial j}{\partial t}(t,r,z) &= \frac{\omega(r)}{r^2} \, \frac{\partial h}{\partial z}(t,r,z), \label{jeq} \\
-\frac{1}{r} \, \frac{\partial^2 h}{\partial t\partial z}(t,r,z) \, \partial_r + \frac{1}{r} \, \frac{\partial^2 h}{\partial t\partial r}(t,r,z) \, \partial_z &= 2 P\left( r u(r) j(t,r,z) \, \partial_r\right), \label{heq}
\end{align}
where $\omega(r) = 2u(r) + r u'(r)$ is the vorticity defined by $\curl X = \omega(r) \, \partial_z$.
Applying the curl to both sides of equation \eqref{heq} to eliminate the projection operator, we obtain
\begin{equation}\label{hlaplacian}
\frac{\partial}{\partial t} \left[ \frac{\partial}{\partial r}\left( \frac{1}{r} \, \frac{\partial h}{\partial r}\right) + \frac{1}{r} \, \frac{\partial^2h}{\partial z^2} \right]
= -2r u(r) \, \frac{\partial j}{\partial z}.
\end{equation}

Differentiating \eqref{hlaplacian} in time and substituting \eqref{jeq} we obtain the single equation
\begin{equation}\label{hsingle}
\frac{\partial^2}{\partial t^2}\left[ \frac{\partial}{\partial r}\left( \frac{1}{r} \, \frac{\partial h}{\partial r}\right) + \frac{1}{r} \, \frac{\partial^2 h}{\partial z^2}\right]
= -\frac{2u(r) \omega(r)}{r}\, \frac{\partial^2 h}{\partial z^2}.
\end{equation}
Expand $h$ in a Fourier series in $z$ to get
$$ h(t,r,z) = \sum_{n\in \mathbb{Z}} h_n(t,r) e^{inz}.$$
Then for each $n$ we can solve the eigenvalue problem
$$ \frac{d}{dr} \left( \frac{1}{r} \, \phi'(r)\right) - \frac{n^2}{r} \, \phi(r) = \frac{2C u(r) \omega(r)}{r} \, \phi(r);$$
to make this look more familiar we set $\phi(r) = r \psi(r)$ and obtain
$$ \frac{1}{r} \frac{d}{dr} \big( r \psi'(r)\big) - \Big( n^2 + \frac{1}{r^2}\Big) \psi(r) = 2C u(r) \omega(r) \psi(r),$$
which is a singular Sturm-Liouville problem analogous to the Bessel equation.
We obtain a sequence of eigenfunctions $\phi_{mn}(r)$ for $m\in \mathbb{N}$, with eigenvalues $C_{mn}$. We see that
$$ 2 C \int_0^1 \frac{u(r) \omega(r)}{r}\, \phi(r)^2 \, dr = -\int_0^1 \frac{1}{r} \, \phi'(r)^2 \, dr - \int_0^1 \frac{n^2}{r} \, \phi(r)^2 \, dr,$$
so that if $\omega(r)u(r)>0$, then  $C$ must be strictly negative; we write $C = -\lambda_{mn}^2$ for the eigenfunction $\phi_{mn}(r)$. Expanding $h_n(t,r)$ in a basis of such
eigenfunctions as
$$ h(t,r,z) = \sum_{n\in\mathbb{Z}} \sum_{m=1}^{\infty} h_{mn}(t) \phi_{mn}(r) e^{inz},$$
equation \eqref{hsingle} becomes
$$ -\lambda_{mn}^2 h_{mn}''(t) = n^2 h_{mn}(t),$$
which obviously has solutions
$$ h_{mn}(t) = a_{mn} \cos{\left( \frac{nt}{\lambda_{mn}}\right)} + b_{mn} \sin{\left( \frac{n t}{\lambda_{mn}}\right)}$$
for some coefficients $a_{mn}$ and $b_{mn}$.

Suppose $a_{m,n}=a_{m,-n}=\tfrac{1}{2}$ for some $(m,n)$ with $n\ne 0$, and that all other $a$ are zero and that every $b$ is zero,
so that $h(t,r,z) = \cos{\left( \frac{nt}{\lambda_{mn}}\right)} \phi_{mn}(r) \cos{nz}$.
Then by equation \eqref{hlaplacian} we compute that
$$ j(t,r,z) = -\frac{\lambda_{mn} \omega(r)}{r^2} \, \phi_{mn}(r) \sin{nz} \sin{\left( \frac{nt}{\lambda_{mn}}\right)}.$$

To find the Jacobi fields, write $Y$ in equation \eqref{linearizedflow} as
$$ Y(t,r,z) = -\frac{1}{r} \, \frac{\partial g}{\partial z}(t,r,z) \, \partial_r + \frac{1}{r} \, \frac{\partial g}{\partial r}(t,r,z) \, \partial_z + f(t,r,z) \, \partial_\theta.$$
We easily compute that $X = u(r) \, \frac{\partial}{\partial \theta}$ gives $[X,Y] = \frac{1}{r} \frac{\partial g}{\partial z} u'(r) \, \frac{\partial}{\partial\theta}$, and thus equation \eqref{linearizedflow} becomes in components
\begin{align*}
\frac{\partial g}{\partial t}(t,r,z) &= h(t,r,z)  \\
\frac{\partial f}{\partial t}(t,r,z) + \frac{u'(r)}{r} \, \frac{\partial g}{\partial z}(t,r,z) &= j(t,r,z).
\end{align*}
With $g(0,r,z)=f(0,r,z)=0$, we find that
\begin{align*}
g(t,r,z) &= \frac{\lambda_{mn}}{n} \, \cos{nz} \sin{\left( \frac{nt}{\lambda_{mn}}\right)} \phi_{mn}(r) \\
f(t,r,z) &= \frac{2\lambda_{mn}^2u(r)}{nr^2}\, \sin{nz} \left( \cos{\left( \frac{nt}{\lambda_{mn}}\right)} - 1\right) \phi_{mn}(r).
\end{align*}
Thus both $f$ and $g$ vanish when $t=0$ and when $t = 2 \pi \lambda_{mn}/n$, so $\eta(2\pi\lambda_{mn}/n)$ is monoconjugate to the identity along $\eta$.
\end{proof}

\begin{remark}
Using the Sturm comparison theorem we can estimate the spacing of the eigenvalues $\lambda_{mn}$ and show that for fixed $m$ the sequence $\lambda_{mn}/n$ has a finite limit as $n\to\infty$. Just as in \cite{EMP}, this must be an epiconjugate point. Therefore the differential of the exponential map is not Fredholm along any geodesic of this form. It is
worth noting that the reason the Jacobi equation is explicitly solvable in this case is because there is no ``drift'' term, so the total time derivative agrees with the partial
time derivative, in the same way as in \cite{EMP}.
\end{remark}

It would be very interesting to generalize the curvature computation to fields of the form $X = u(r) \sin{z} \, \partial_{\theta}$,
which is the initial velocity field of the Hou-Luo initial condition \cite{LH} that leads numerically to a blowup solution.
We expect that the formula $\int \overline{H_n'}J_n - \overline{J_n'}H_n$ which appears both here and in \cite{Preston05} is
a typical feature of curvature formulas when computed correctly, although they doubtless become substantially more complicated.

\end{document}